\newtheorem{theorem}{Theorem}
\theoremstyle{plain}
\newtheorem{lemma}{Lemma}
\newtheorem{remark}{Remark}
\numberwithin{equation}{section}
\begin{document}
\title[Hardy--Littlewood constants]{A new estimate for the constants of an
inequality due to Hardy and Littlewood}
\author[Antonio Gomes Nunes]{Antonio Gomes Nunes}
\address[A.G. Nunes]{Department of Mathematics - CCEN \\
\indent UFERSA \\
\indent Mossor\'{o}, RN, Brazil.}
\email{nunesag@gmail.com and nunesag@ufersa.edu.br}
\thanks{Partially supported by Capes.}
\thanks{MSC2010: 46G25}
\keywords{Optimal constants, Hardy--Littlewood inequality}

\begin{abstract}
In this paper we provide a family of inequalities, extending a recent result
due to Albuquerque \textit{et al.}
\end{abstract}

\maketitle

\section{Introduction}

The Hardy--Littlewood inequalities \cite{hl} for $m$--linear forms and
polynomials (see \cite{a22, a, a11, ap2, dimant, pt, pra}) are perfect
extensions of the Bohnenblust--Hille inequality \cite{bh} when the sequence
space $c_{0}$ is replaced by the sequence space $\ell _{p}$. These
inequalities assert that for any integer $m\geq 2$ there exist constants $%
C_{m,p}^{\mathbb{K}},D_{m,p}^{\mathbb{K}}\geq 1$ such that
\begin{equation}
\left( \sum_{j_{1},\cdots ,j_{m}=1}^{\infty }\left\vert T(e_{j_{1}},\cdots
,e_{j_{m}})\right\vert ^{\frac{2mp}{mp+p-2m}}\right) ^{\frac{mp+p-2m}{2mp}%
}\leq C_{m,p}^{\mathbb{K}}\left\Vert T\right\Vert ,  \label{i99}
\end{equation}%
when $2m\leq p\leq \infty $, and%
\begin{equation*}
\left( \sum_{j_{1},\cdots ,j_{m}=1}^{\infty }\left\vert T(e_{j_{1}},\cdots
,e_{j_{m}})\right\vert ^{\frac{p}{p-m}}\right) ^{\frac{p-m}{p}}\leq D_{m,p}^{%
\mathbb{K}}\left\Vert T\right\Vert ,
\end{equation*}%
when $m<p\leq 2m$, for all continuous $m$--linear forms $T:\ell _{p}\times
\cdots \times \ell _{p}\rightarrow \mathbb{K}$ (here, and henceforth, $%
\mathbb{K}=\mathbb{R}$ or $\mathbb{C}$). Both exponents are optimal, $i.e.,$
cannot be smaller without paying the price of a dependence on $n$ arising on
the respective constants. Following usual convention in the field, $c_{0}$
is understood as the substitute of $\ell _{\infty }$ when the exponent $p$
goes to infinity.

The investigation of the optimal constants of the Hardy--Littlewood
inequalities is closely related to the fashionable, mysterious and puzzling
investigation of the optimal Bohnenblust--Hille inequality constants (see,
for instance \cite{pt} and the references therein).

In this note we extend the following result of \cite[Theorem 3]{rrrr}:

\begin{theorem}[Albuquerque et al.]
Let $m\geq 2$ be a positive integer and $m<p\leq 2m-2.$ Then, for all
continuous $m$-linear forms $T:\ell _{p}\times \cdots \times \ell
_{p}\rightarrow \mathbb{K}$, we have
\begin{equation*}
\left( \sum_{j_{i}=1}^{n}\left( \sum_{\widehat{j_{i}}=1}^{n}\left\vert
T\left( e_{j_{1}},\ldots ,e_{j_{m}}\right) \right\vert ^{\frac{p}{p-\left(
m-1\right) }}\right) ^{\frac{p-\left( m-1\right) }{p}\cdot \frac{p}{p-m}%
}\right) ^{\frac{p-m}{p}}\leq 2^{\frac{\left( m-1\right) \left( p-m+1\right)
}{p}}\left\Vert T\right\Vert .
\end{equation*}
\end{theorem}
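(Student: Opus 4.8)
The plan is to obtain the statement by interpolating, via Hölder's inequality, between the classical Hardy--Littlewood inequality for $m$-linear forms and a ``mixed'' Hardy--Littlewood estimate whose inner exponent is $2$. As usual one may assume $T$ is defined on $\ell_p^{n}$ (the bound being independent of $n$), write $a_{j_1\cdots j_m}:=T(e_{j_1},\dots ,e_{j_m})$, and, by symmetry of the statement in the distinguished index, single out $i=1$. Put $\rho:=\tfrac{p}{p-m}$ and $\sigma:=\tfrac{p}{p-(m-1)}$; in the range $m<p\le 2m-2$ one has $2\le\sigma<\rho$, and the left-hand side of the statement is precisely
$$\Bigl(\sum_{j_1}\bigl\|(a_{j_1\,j_2\cdots j_m})_{j_2,\dots ,j_m}\bigr\|_{\ell_\sigma}^{\rho}\Bigr)^{1/\rho}.$$

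Two ingredients are needed. First, the $m$-linear Hardy--Littlewood inequality in the range $m<p\le 2m$ applies here, since $p\le 2m-2<2m$, and, in its sharp-constant form, it gives $\bigl(\sum_{j_1,\dots ,j_m}|a_{j_1\cdots j_m}|^{\rho}\bigr)^{1/\rho}\le 2^{(m-1)(p-m)/p}\,\|T\|$ (for $\mathbb K=\mathbb C$ the constant is smaller still). Second, one needs the mixed inequality
$$\Bigl(\sum_{j_1=1}^{n}\Bigl(\sum_{\widehat{j_1}=1}^{n}|a_{j_1\cdots j_m}|^{2}\Bigr)^{\rho/2}\Bigr)^{1/\rho}\ \le\ \bigl(\sqrt2\bigr)^{\,m-1}\,\|T\|,$$
the $\ell_p$-analogue of the Defant--Popa--Schwarting inequality, which one would prove by freezing $j_1$, applying Khinchin's inequality in the remaining $m-1$ variables, and then summing in $j_1$ while controlling the first coordinate through the functional estimate $\|T(\,\cdot\,,y_2,\dots ,y_m)\|_{(\ell_p)^{\ast}}\le\|T\|\,\prod_{i\ge 2}\|y_i\|_{\ell_p}$.

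The interpolation itself is elementary. Because $p\le 2m-2$, the number $\theta:=\dfrac{p-2m+2}{p-2m}$ lies in $[0,1)$ and satisfies $\dfrac1\sigma=\dfrac{\theta}{\rho}+\dfrac{1-\theta}{2}$; hence, for every fixed $j_1$, interpolation of $\ell_q$-norms gives $\|(a_{j_1\cdots j_m})_{\widehat{j_1}}\|_{\ell_\sigma}\le\|(a_{j_1\cdots j_m})_{\widehat{j_1}}\|_{\ell_\rho}^{\theta}\,\|(a_{j_1\cdots j_m})_{\widehat{j_1}}\|_{\ell_2}^{1-\theta}$. Raising this to the power $\rho$, summing over $j_1$, and applying Hölder's inequality in $j_1$ with conjugate exponents $\tfrac1\theta$ and $\tfrac1{1-\theta}$ yields
$$\sum_{j_1}\bigl\|(a_{j_1\cdots j_m})_{\widehat{j_1}}\bigr\|_{\ell_\sigma}^{\rho}\ \le\ \Bigl(\sum_{j_1,\widehat{j_1}}|a_{j_1\cdots j_m}|^{\rho}\Bigr)^{\!\theta}\Bigl(\sum_{j_1}\bigl(\textstyle\sum_{\widehat{j_1}}|a_{j_1\cdots j_m}|^{2}\bigr)^{\rho/2}\Bigr)^{\!1-\theta}.$$
Inserting the two ingredients above and taking $\rho$-th roots, the resulting constant is $\bigl(2^{(m-1)(p-m)/p}\bigr)^{\theta}\bigl((\sqrt2)^{m-1}\bigr)^{1-\theta}=2^{(m-1)\left(\frac{\theta(p-m)}{p}+\frac{1-\theta}{2}\right)}=2^{(m-1)/\sigma}=2^{(m-1)(p-m+1)/p}$, which is exactly the asserted bound; the real and complex cases run in parallel.

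The step I expect to be the genuine obstacle is the mixed inequality of the second ingredient. On $c_0$ it is the Defant--Popa--Schwarting inequality and Khinchin's inequality behaves perfectly, but on $\ell_p$ a careless Khinchin estimate inserts a spurious factor $n^{(m-1)/p}$, because the Rademacher vectors $\sum_{j}r_j(t)e_j$ have $\ell_p$-norm $n^{1/p}$ rather than $1$; making the argument dimension-free is what forces one to route the untouched first coordinate through the duality $(\ell_p)^{\ast}=\ell_{p'}$, together with a Minkowski-type interchange of the order of summation. A secondary, purely bookkeeping, point is that the final constant comes out as $2^{(m-1)(p-m+1)/p}$ only if one feeds in the refined value $D_{m,p}^{\mathbb K}\le 2^{(m-1)(p-m)/p}$ for the $m$-linear inequality, not merely the crude bound $(\sqrt2)^{m-1}$.
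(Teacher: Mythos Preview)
Your interpolation scheme is algebraically correct, and ingredient~1 (the refined constant $D_{m,p}\le 2^{(m-1)(p-m)/p}$) is indeed available. The fatal gap is ingredient~2: the mixed $(\ell_\rho,\ell_2)$ inequality you need on $\ell_p$ is \emph{false} throughout the open interval $m<p<2m-2$. Take $T(x^{(1)},\dots,x^{(m)})=x^{(1)}_1\,B(x^{(2)},\dots,x^{(m)})$ for an arbitrary $(m-1)$-linear form $B:(\ell_p^n)^{m-1}\to\mathbb K$. Then $\|T\|=\|B\|$, only the term $j_1=1$ survives, and your ingredient~2 collapses to
\[
\Bigl(\sum_{j_2,\dots,j_m}|B(e_{j_2},\dots,e_{j_m})|^{2}\Bigr)^{1/2}\le(\sqrt 2)^{\,m-1}\|B\|,
\]
which is the $(m-1)$-linear Hardy--Littlewood inequality with exponent $2$. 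That holds with a dimension-free constant only when $p\ge 2(m-1)=2m-2$; for $p<2m-2$ the optimal exponent $p/(p-m+1)$ strictly exceeds $2$, and already the diagonal form $B(y^{(2)},\dots,y^{(m)})=\sum_j y^{(2)}_j\cdots y^{(m)}_j$ gives a ratio $n^{1/2-(p-m+1)/p}\to\infty$. So the obstacle you flagged is not a technicality to be repaired by duality and a Minkowski swap---the estimate itself fails. (At the endpoint $p=2m-2$ one has $\theta=0$ and ingredient~2 \emph{is} the theorem, so nothing is gained there either.)

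The paper's route is different and sidesteps this. It works first on $\ell_\infty$: for $S:(\ell_\infty^n)^m\to\mathbb K$ it interpolates the \emph{inner} norm between $\ell_2$ (via multiple Khinchin) and $\ell_\infty$ (via the Contraction Principle), both of which are dominated by the \emph{same} Rademacher average $R_n(j_1)$; the outer $\ell_1$-sum $\sum_{j_1}R_n(j_1)$ is then at most $\|S\|$ because on $\ell_\infty^n$ the Rademacher vectors have norm $1$. Only afterwards does the reduction lemma (the paper's Lemma~1) transfer the estimate to $\ell_p$, and it is precisely that lemma's hypothesis $s\ge\eta_2=p/(p-m+1)$ which forces the inner exponent to be $\sigma$ rather than $2$ and produces the range $p\le 2m-2$.
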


More precisely, using a different technique we find a family of inequalities
extending the above result. Our result reads as follows, where $A_{\lambda
_{0}}$ is the optimal constant of the Khinchin inequality (defined in
Section \ref{kkmm}):

\begin{theorem}
\label{kkmm}If $\lambda _{0}\in \lbrack 1,2]$ and%
\begin{equation*}
\lambda _{0}m<p\leq \frac{2\lambda _{0}\left( m-1\right) }{2-\lambda _{0}},
\end{equation*}%
then
\begin{equation*}
\left( \sum_{j_{i}=1}^{n}\left( \sum_{\widehat{j_{i}}=1}^{n}\left\vert
T\left( e_{j_{1}},\dots ,e_{j_{m}}\right) \right\vert ^{s}\right) ^{\frac{1}{%
s}\eta _{1}}\right) ^{\frac{1}{\eta _{1}}}\leq A_{\lambda _{0}}^{\frac{%
-2\left( m-1\right) }{s}}\left\Vert T\right\Vert
\end{equation*}
for
\begin{eqnarray*}
\eta _{1} &=&\frac{\lambda _{0}p}{p-\lambda _{0}m}, \\
s &=&\frac{\lambda _{0}p}{p-\lambda _{0}m+\lambda _{0}},
\end{eqnarray*}%
all $m$-linear forms $T:\ell _{p}^{n}\times \cdots \times \ell
_{p}^{n}\rightarrow \mathbb{K}$, and all positive integers $n.$
\end{theorem}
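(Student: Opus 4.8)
The plan is to fix the distinguished index $i$ — which by symmetry of the statement we may take to be $i=1$ — and, by homogeneity, to assume $\|T\|=1$. Writing $a_{\mathbf j}=T(e_{j_1},\dots,e_{j_m})$ and $\widehat{j_1}=(j_2,\dots,j_m)$, the first step is to record the exponent identities $\tfrac1s=\tfrac1{\lambda_0}-\tfrac{m-1}{p}$ and $\tfrac1{\eta_1}=\tfrac1{\lambda_0}-\tfrac{m}{p}$, and to observe that the upper bound $p\le\tfrac{2\lambda_0(m-1)}{2-\lambda_0}$ is precisely the condition $s\ge 2$, while $\lambda_0 m<p$ gives $0<\tfrac1{\eta_1}<\tfrac1s$, so that $2\le s\le\eta_1<\infty$. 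These are the only junctures at which the hypotheses will be consumed.

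The engine is the Khinchin inequality applied, for each fixed $j_1$, to the $m-1$ variables $j_2,\dots,j_m$. Iterating the scalar Khinchin inequality with exponent $\lambda_0$ together with Minkowski's integral inequality (to pass the successive $\ell_2$-norms through the $L_{\lambda_0}$-averages) gives
\begin{equation*}
\Big(\sum_{\widehat{j_1}}|a_{\mathbf j}|^2\Big)^{1/2}\le A_{\lambda_0}^{-(m-1)}\Big(\int_{[0,1]^{m-1}}\big|T\big(e_{j_1},\varepsilon_2(t_2),\dots,\varepsilon_m(t_m)\big)\big|^{\lambda_0}\,dt\Big)^{1/\lambda_0},
\end{equation*}
where $\varepsilon_k(t_k)=\sum_{j_k}r_{j_k}(t_k)e_{j_k}$ and the $r_{j_k}$ are Rademacher functions. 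Since $s\ge 2$ and $|a_{\mathbf j}|\le 1$, one has $\big(\sum_{\widehat{j_1}}|a_{\mathbf j}|^s\big)^{1/s}\le\big(\sum_{\widehat{j_1}}|a_{\mathbf j}|^2\big)^{1/s}=\big[\big(\sum_{\widehat{j_1}}|a_{\mathbf j}|^2\big)^{1/2}\big]^{2/s}$, so raising the displayed estimate to the power $2/s$ produces exactly the factor $A_{\lambda_0}^{-2(m-1)/s}$, which is why this constant, rather than $A_{\lambda_0}^{-(m-1)}$, appears.

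It then remains to raise to the power $\eta_1$, sum over $j_1$, use Minkowski's integral inequality once more to bring $\sum_{j_1}$ inside the integral over $[0,1]^{m-1}$ (licit because $\tfrac{2\eta_1}{s\lambda_0}\ge1$), and, for each fixed $t$, estimate $\sum_{j_1}|T(e_{j_1},\varepsilon_2,\dots,\varepsilon_m)|^{2\eta_1/s}$ by the duality $(\ell_p^n)^*=\ell_{p'}^n$ applied to the functional $x\mapsto T(x,\varepsilon_2,\dots,\varepsilon_m)$, followed by monotonicity of $\ell_q$-norms (licit because $\tfrac{2\eta_1}{s}\ge p'$). A short computation shows that both auxiliary inequalities $\tfrac{2\eta_1}{s\lambda_0}\ge1$ and $\tfrac{2\eta_1}{s}\ge p'$ follow, after clearing denominators, from $\lambda_0 m<p$ (using $\lambda_0 m\ge 2$, hence $p>2$), and collecting the exponents yields the asserted inequality with constant $A_{\lambda_0}^{-2(m-1)/s}$.

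The main obstacle, and the heart of the argument, is the accounting of the dimension $n$: the sign vectors $\varepsilon_k(t_k)$ have $\ell_p$-norm $n^{1/p}$, so the order in which the Hölder and Minkowski steps are performed — and the precise exponents $s$ and $\eta_1$ at which the sums are taken — must be chosen so that all stray powers of $n$ cancel and the final bound is genuinely dimension-free. This is exactly what forces the value $\tfrac{2\lambda_0(m-1)}{2-\lambda_0}$ for the upper endpoint and is the step that requires the full strength of the hypotheses; an equivalent way to organize it is to view the inequality as an interpolation between the $\lambda_0$-Khinchin estimate and the plain duality estimate $\|T\|_{(\infty,\dots,\infty)}\le\|T\|$. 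Finally, taking $\lambda_0=1$, so that $A_1=1/\sqrt2$, $s=\tfrac{p}{p-m+1}$ and $\eta_1=\tfrac{p}{p-m}$, one recovers the theorem of Albuquerque \emph{et al.}\ (with constant $2^{(m-1)(p-m+1)/p}$), which serves as a consistency check on the bookkeeping.
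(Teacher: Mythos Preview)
Your argument has a genuine gap at exactly the point you yourself flag as ``the main obstacle'': the stray powers of $n$ do \emph{not} cancel in the scheme you describe. Tracing your steps with $\|T\|=1$, after Khinchin and Minkowski you are left to bound, for each fixed $t$, the sum $\sum_{j_1}|T(e_{j_1},\varepsilon_2,\ldots,\varepsilon_m)|^{2\eta_1/s}$. Duality gives only
\[
\Big(\sum_{j_1}|T(e_{j_1},\varepsilon_2,\ldots,\varepsilon_m)|^{p'}\Big)^{1/p'}\le \|\varepsilon_2\|_p\cdots\|\varepsilon_m\|_p=n^{(m-1)/p},
\]
and passing to the exponent $2\eta_1/s$ by monotonicity, then unwinding the outer powers, leaves a residual factor $n^{2(m-1)/(sp)}>1$ in the final estimate. (A quick sanity check: $\lambda_0=1$, $m=3$, $p=4$ gives $s=2$, $\eta_1=4$, and your chain yields the bound $2\,n^{1/2}$ rather than $2$.) Neither the choice of $s,\eta_1$ nor the condition $p\le\tfrac{2\lambda_0(m-1)}{2-\lambda_0}$ has anything to do with this; as you correctly noted earlier, the latter is exactly $s\ge2$, and it is consumed at the step $\sum|a|^s\le\sum|a|^2$, not in any cancellation of dimension.

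What is missing is precisely the device the paper uses: one first proves the mixed $(\lambda_0,s)$ estimate for $m$-linear forms $S:\ell_\infty^n\times\cdots\times\ell_\infty^n\to\mathbb{K}$, where the sign vectors satisfy $\|\varepsilon_k\|_\infty=1$ and no factor of $n$ ever appears; then one invokes a separate transfer lemma (Lemma~1 in the paper, taken from \cite{rrrr}) which converts the $(\lambda_0,s)$ inequality on $\ell_\infty$ into the $(\eta_1,s)$ inequality on $\ell_p$ with the \emph{same} constant. Your proposal effectively tries to absorb this transfer into the Khinchin--duality computation, but the two ingredients do not merge: the passage from $\ell_\infty$ to $\ell_p$ with no loss of constant is the content of that lemma and requires its own argument.
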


Our main technique is based on an original argument developed in \cite{rrrr}%
, with some slight technical changes.

\section{The proof of Theorem \protect\ref{kkmm}}

Let $m\geq 2$ be a positive integer, $F$ be a Banach space, $A\subset
I_{m}:=\{1,\dots ,m\},p_{1},\dots ,p_{m},s,\alpha \geq 1$ and
\begin{equation*}
B_{p_{1},\dots ,p_{m}}^{A,s,\alpha ,F,n}:=\inf \left\{ C(n)\,:\left(
\sum_{j_{i}=1}^{n}\left( \sum_{\widehat{j_{i}}=1}^{n}\left\Vert T\left(
e_{j_{1}},\dots ,e_{j_{m}}\right) \right\Vert ^{s}\right) ^{\frac{1}{s}%
\alpha }\right) ^{\frac{1}{\alpha }}\leq C(n),\text{ for all }i\in A\right\}
,
\end{equation*}%
in which $\widehat{j_{i}}$ means that the sum runs over all indexes but $%
j_{i}$, and the infimum is taken over all norm-one $m$-linear operators $%
T:\ell _{p_{1}}^{n}\times \cdots \times \ell _{p_{m}}^{n}\rightarrow F$. We
begin by recalling the following lemma proved in \cite{rrrr}:

\begin{lemma}
\label{geral} Let $1\leq p_{k}<q_{k}\leq \infty ,\,k=1,\dots ,m$ and $%
\lambda _{0},s\geq 1$. If%
\begin{equation}
\sum_{j=1}^{m}\left( \frac{1}{p_{j}}-\frac{1}{q_{j}}\right) <\frac{1}{%
\lambda _{0}}\quad \text{ and }\quad s\geq \left[ \frac{1}{\lambda _{0}}%
-\sum_{j=1}^{m-1}\left( \frac{1}{p_{j}}-\frac{1}{q_{j}}\right) \right]
^{-1}=:\eta _{2}  \label{hipinica}
\end{equation}%
then
\begin{equation*}
B_{p_{1},\dots ,p_{m}}^{\{m\},s,\eta _{1},F,n}\leq B_{q_{1},\dots
,q_{m}}^{I_{m},s,\lambda _{0},F,n},
\end{equation*}%
where%
\begin{equation*}
\eta _{1}:=\left[ \frac{1}{\lambda _{0}}-\sum_{j=1}^{m}\left( \frac{1}{p_{j}}%
-\frac{1}{q_{j}}\right) \right] ^{-1}
\end{equation*}
\end{lemma}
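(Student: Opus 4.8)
The plan is to unwind the two quantities in the statement and to manufacture, from an arbitrary norm-one $m$-linear map on the $\ell _{p}$-spaces, a related map on the $\ell _{q}$-spaces to which the defining inequality of the right-hand constant applies. So fix a norm-one $T:\ell _{p_{1}}^{n}\times \cdots \times \ell _{p_{m}}^{n}\rightarrow F$, write $a_{j_{1}\cdots j_{m}}:=\left\Vert T(e_{j_{1}},\dots ,e_{j_{m}})\right\Vert $, and set $C:=B_{q_{1},\dots ,q_{m}}^{I_{m},s,\lambda _{0},F,n}$; the goal is to bound the mixed $(s,\eta _{1})$-norm of $(a_{j_{1}\cdots j_{m}})$ indexed by $m$ by $C$. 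Put $\delta _{k}:=\frac{1}{p_{k}}-\frac{1}{q_{k}}>0$ and let $r_{k}$ be defined by $\frac{1}{r_{k}}=\delta _{k}$. For a sequence $x^{(k)}\in \ell _{r_{k}}^{n}$ the diagonal multiplier $D_{x^{(k)}}:\ell _{q_{k}}^{n}\rightarrow \ell _{p_{k}}^{n}$ has norm exactly $\Vert x^{(k)}\Vert _{r_{k}}$, so the composition $T\circ (D_{x^{(1)}},\dots ,D_{x^{(m)}})$ is an $m$-linear map on the $\ell _{q}$-spaces of norm at most $\prod _{k}\Vert x^{(k)}\Vert _{r_{k}}$ whose value at $(e_{j_{1}},\dots ,e_{j_{m}})$ has norm $\big(\prod _{k}|x^{(k)}_{j_{k}}|\big)a_{j_{1}\cdots j_{m}}$. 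Applying the definition of $C$ to this map (the mixed norm being homogeneous of degree one in the operator), for every $i\in I_{m}$ and all choices of multipliers I obtain
\begin{equation*}
\Big(\sum_{j_{i}}\Big(\sum_{\widehat{j_{i}}}\big(\prod_{k=1}^{m}|x^{(k)}_{j_{k}}|\big)^{s}a_{j_{1}\cdots j_{m}}^{s}\Big)^{\lambda _{0}/s}\Big)^{1/\lambda _{0}}\leq C\prod_{k=1}^{m}\Vert x^{(k)}\Vert _{r_{k}}.\tag{$\ast$}
\end{equation*}

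The engine for eliminating a multiplier is the elementary duality fact that, for nonnegative scalars $(c_{j})$ and $1\leq \mu <r$, the bound $\Vert x\cdot c\Vert _{\ell _{\mu }}\leq K\Vert x\Vert _{\ell _{r}}$ holding for \emph{every} $x$ forces $\Vert c\Vert _{\ell _{\nu }}\leq K$ with $\frac{1}{\nu }=\frac{1}{\mu }-\frac{1}{r}$ (test with $x_{j}=c_{j}^{\mu /(r-\mu )}$), degenerating to $\Vert c\Vert _{\ell _{\infty }}\leq K$ when $\mu \geq r$ (test with single spikes). I would apply this one coordinate at a time to $(\ast)$. The designated coordinate $m$ is already the outer index in $(\ast)$ for $i=m$, so its multiplier sits outside the inner sum and can be removed at exponent $\lambda _{0}$; this is legitimate because $\lambda _{0}<r_{m}$, i.e. $\delta _{m}<1/\lambda _{0}$, which is contained in the summability hypothesis of (\ref{hipinica}), and it lowers the outer exponent from $\lambda _{0}$ to $[\,1/\lambda _{0}-\delta _{m}\,]^{-1}$.

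The remaining multipliers $x^{(1)},\dots ,x^{(m-1)}$ live inside the inner $\ell _{s}$-sum, so before the scalar lemma can act on one of them I first expose its coordinate by moving the relevant $\ell _{s}$-norm past the current outer norm; this is exactly Minkowski's integral inequality, whose validity requires $s$ to dominate the running outer exponent. The scheme is then to iterate: expose a coordinate by Minkowski, remove its multiplier by the duality lemma, repeat. Carried out in the right order this consumes the factors $\Vert x^{(k)}\Vert _{r_{k}}$ one by one, each removal lowering the working exponent by $\delta _{k}$, so that after all $m$ coordinates are processed the total decrease is $\sum _{k}\delta _{k}$; the surviving outer exponent is then $[\,1/\lambda _{0}-\sum _{k}\delta _{k}\,]^{-1}=\eta _{1}$, the inner exponent is still $s$, no power of $n$ survives, and the constant remains $C$ — which is the assertion $B_{p_{1},\dots ,p_{m}}^{\{m\},s,\eta _{1},F,n}\leq C$. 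In this picture the two hypotheses of (\ref{hipinica}) acquire clear meanings: $\sum _{k}(1/p_{k}-1/q_{k})<1/\lambda _{0}$ keeps every intermediate exponent positive and finite, so that $\eta _{1}$ and the partial exponents are well defined and the scalar lemma is non-degenerate where needed, while $s\geq \eta _{2}=[\,1/\lambda _{0}-\sum _{k<m}\delta _{k}\,]^{-1}$ is precisely the threshold ensuring that $s$ still dominates the outer exponent at the final inner Minkowski interchange.

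The genuinely delicate point — and the part I would write out in full — is the exponent bookkeeping through this iteration. One must choose the order in which the inner coordinates are peeled so that at every Minkowski step the inner exponent really does dominate the current outer exponent (forcing exactly $s\geq \eta _{2}$ and no more), so that every invocation of the scalar duality lemma falls in the regime it is stated for, and so that the successive shifts telescope to $\eta _{1}$ with no residual factor of $n$. The main obstacle I anticipate is that for $s$ strictly larger than $\eta _{2}$ a single peeling order may only deliver a neighbouring exponent rather than $\eta _{1}$ itself; reconciling this likely requires combining — by Hölder interpolation — the inequalities $(\ast)$ coming from different indices $i$ and/or different peeling orders, in the style of the Bohnenblust--Hille arguments, in order to recover the exact symmetric exponent $\eta _{1}$ uniformly for all $s\geq \eta _{2}$. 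Establishing that the single condition $s\geq \eta _{2}$ suffices for the entire chain, rather than a stronger coordinate-by-coordinate requirement, is the crux of the proof.
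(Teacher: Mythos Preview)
The paper does not give a proof of this lemma: it is stated with the preface ``We begin by recalling the following lemma proved in \cite{rrrr}'' and is simply invoked as a black box. There is therefore no in-paper argument to compare your proposal against.

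That said, your sketch is the right shape and matches the method used in the cited source. The key ingredients --- composing $T$ with diagonal multipliers $D_{x^{(k)}}:\ell_{q_k}^n\to\ell_{p_k}^n$ to transport the problem to the $q$-side, then stripping the multipliers one at a time via the $\ell_\mu$--$\ell_r$ duality trick, with Minkowski interchanges to expose each inner coordinate --- are exactly the mechanism behind this class of ``regularity principle'' lemmas. Your reading of the two hypotheses in (\ref{hipinica}) is also correct: the first keeps all intermediate outer exponents finite, and $s\geq\eta_2$ is precisely what guarantees that the last Minkowski swap (after the outer exponent has grown to $\eta_2$) is still licit.

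One caution: the worry you raise at the end --- that for $s>\eta_2$ a single peeling order might land you at a ``neighbouring'' exponent, forcing a H\"older interpolation among several versions of $(\ast)$ --- is not actually needed here. Since the conclusion is only asserted for the singleton $A=\{m\}$, you keep $m$ as the outer index throughout; each inner coordinate $k<m$ is brought out by Minkowski (valid as long as $s$ dominates the current outer exponent), its multiplier is removed at the \emph{inner} level (which leaves $s$ unchanged), and then Minkowski puts it back inside. The outer exponent only moves once, from $\lambda_0$ to $\eta_1$, when you strip $x^{(m)}$. So the bookkeeping, while requiring care, is a straight induction on the number of remaining multipliers rather than an interpolation argument; this is how \cite{rrrr} organizes it.
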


We also need to recall the Khinchin inequality: for any $0<q<\infty $, there
are positive constants $A_{q}$, $B_{q}$ such that regardless of the positive
integer $n$ and of the scalar sequence $(a_{j})_{j=1}^{n}$ we have
\begin{equation*}
A_{q}\left( \sum\limits_{j=1}^{n}|a_{j}|^{2}\right) ^{\frac{1}{2}}\leq
\left( \int\nolimits_{\lbrack 0,1]}\left\vert
\sum\limits_{j=1}^{n}a_{j}r_{j}(t)\right\vert ^{q}dt\right) ^{\frac{1}{q}%
}\leq B_{q}\left( \sum\limits_{j=1}^{n}|a_{j}|^{2}\right) ^{\frac{1}{2}},
\end{equation*}%
where $r_{j}$ are the classical Rademacher functions (random variables).

The best constants $A_{q}$ are the following ones (see \cite{haage}):

\begin{itemize}
\item $\displaystyle A_{q}=\sqrt{2}\left( \frac{\Gamma \left( \frac{1+q}{2}%
\right) }{\sqrt{\pi }}\right) ^{\frac{1}{q}}$ if $q>q_{0}\cong 1.85$;

\item $\displaystyle A_{q}=2^{\frac{1}{2}-\frac{1}{q}}$ if $q<q_{0},$
\end{itemize}

where $q_{0}\in (1,2)$ is the only real number such that $\Gamma \left(
\frac{p_{0}+1}{2}\right) =\frac{\sqrt{\pi }}{2}$. For complex scalars, using
Steinhaus variables instead of Rademacher functions it is well known that a
similar inequality holds, but with better constants. In this case the
optimal constant is

\begin{itemize}
\item $\displaystyle A_{q}=\Gamma\left( \frac{q+2}{2}\right) ^{\frac{1}{q}}$
if $q\in\lbrack1,2]$.
\end{itemize}

The notation of the constants $A_{q}$ above will be used in all this paper.
The following result is a variant of \cite{rrrr}, and is based on the
Contraction Principle (see \cite[Theorem 12.2]{Di}). From now on $r_{i}(t)$
are the Rademacher functions.

\begin{lemma}
\label{778}Regardless of the choice of the positive integers $m,N$ and the
scalars $a_{i_{1},\dots ,i_{m}}$, $i_{1},\dots ,i_{m}=1,\dots ,N$,
\begin{equation*}
\max_{\substack{ i_{k}=1,\dots ,N  \\ k=1,\dots ,m}}\left\vert
a_{i_{1},\dots ,i_{m}}\right\vert \leq \left( \int_{\lbrack
0,1]^{m}}\left\vert \sum_{i_{1},\dots ,i_{m}=1}^{N}r_{i_{1}}(t_{1})\cdots
r_{i_{m}}(t_{m})a_{i_{1},\dots ,i_{m}}\right\vert ^{t}\,dt_{1}\cdots
dt_{m}\right) ^{1/t}
\end{equation*}
for all $t\geq1$.
\end{lemma}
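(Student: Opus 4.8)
The plan is to reduce the statement to the case $t=1$ and then isolate a single coefficient by orthogonality. Since Lebesgue measure on the cube $[0,1]^m$ is a probability measure, the map $t\mapsto\left(\int_{[0,1]^m}|f|^{t}\right)^{1/t}$ is nondecreasing in $t$ by the power--mean (Jensen) inequality; hence for every $t\geq 1$ the right-hand side of the asserted inequality is at least its value at $t=1$. Consequently it suffices to establish
\begin{equation*}
\max_{\substack{ i_{k}=1,\dots,N \\ k=1,\dots,m}}\left\vert a_{i_{1},\dots,i_{m}}\right\vert \leq \int_{[0,1]^{m}}\left\vert \sum_{i_{1},\dots,i_{m}=1}^{N}r_{i_{1}}(t_{1})\cdots r_{i_{m}}(t_{m})\,a_{i_{1},\dots,i_{m}}\right\vert \,dt_{1}\cdots dt_{m}.
\end{equation*}

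Next I would exploit that the tensor products $r_{i_{1}}(t_{1})\cdots r_{i_{m}}(t_{m})$, indexed by the multi-indices $(i_{1},\dots,i_{m})$, form an orthonormal system in $L^{2}([0,1]^{m})$. This follows from the one-dimensional orthonormality $\int_{0}^{1}r_{i}(t)r_{j}(t)\,dt=\delta_{ij}$ together with independence across the variables $t_{1},\dots,t_{m}$, which factors the integral as a product of one-dimensional integrals. Fixing a multi-index $(k_{1},\dots,k_{m})$ that attains the maximum, this orthonormality yields the coefficient-extraction identity
\begin{equation*}
a_{k_{1},\dots,k_{m}}=\int_{[0,1]^{m}}r_{k_{1}}(t_{1})\cdots r_{k_{m}}(t_{m})\left( \sum_{i_{1},\dots,i_{m}=1}^{N}r_{i_{1}}(t_{1})\cdots r_{i_{m}}(t_{m})\,a_{i_{1},\dots,i_{m}}\right) dt_{1}\cdots dt_{m}.
\end{equation*}
Taking absolute values inside the integral and using that $\left\vert r_{k_{1}}(t_{1})\cdots r_{k_{m}}(t_{m})\right\vert =1$ almost everywhere, I obtain $\left\vert a_{k_{1},\dots,k_{m}}\right\vert $ bounded by the $L^{1}$-integral on the right, which is exactly the $t=1$ case. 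Combined with the monotonicity reduction, the inequality holds for all $t\geq 1$.

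There is essentially no hard obstacle: the lemma is the multilinear counterpart of the elementary fact that a single Fourier-type coefficient is dominated by the $L^{1}$ norm of the series, and the Contraction Principle cited in the text provides convenient bookkeeping for the sign manipulations. The only points that genuinely require care are verifying the orthonormality of the Rademacher tensor products (which is where independence across the $m$ variables enters) and checking that the reduction to $t=1$ runs in the favorable direction, i.e.\ that passing from $t$ to $1$ \emph{decreases} the right-hand side, which is precisely guaranteed by $[0,1]^{m}$ carrying a probability measure.
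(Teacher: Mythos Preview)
Your proof is correct. The reduction to $t=1$ via monotonicity of $L^t$ norms on a probability space is valid, and the coefficient-extraction argument using orthonormality of the Rademacher tensor products is a clean way to finish; since $|r_{k_j}|=1$ a.e., the $L^1$ bound follows immediately.

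The paper's proof proceeds differently. It argues by induction on $m$: the base case $m=1$ is the Contraction Principle itself, and the inductive step peels off the $t_1$-integral, applies the one-dimensional Contraction Principle to strip the sum over $i_1$ (fixing any value of $i_1$), and then invokes the induction hypothesis on the remaining $(m-1)$-fold integral. No reduction to $t=1$ is needed because the Contraction Principle already controls the $L^t$ norm for every $t\geq 1$. Your route is more elementary for scalar coefficients---it uses nothing beyond orthogonality of the Rademacher system and Jensen's inequality---while the paper's inductive Contraction-Principle argument is the one that extends verbatim to vector-valued coefficients in an arbitrary Banach space, where orthogonality is unavailable. For the scalar lemma as stated, both are equally valid; your argument is arguably tidier, the paper's is the one that sits naturally in the Banach-space framework invoked elsewhere in the text.
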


\begin{proof}
The proof is an adaptation of an argument used in \cite{rrrr}. Essentially,
we have to use the Contraction Principle inductively. The case $m=1$ is
nothing else than the standard version of Contraction Principle (see \cite[%
Theorem 12.2]{Di}). For all positive integers $i_{1},\dots ,i_{m}$,
\begin{align*}
& \left( \int_{[0,1]^{m}}\left\vert \sum_{i_{1},\dots
,i_{m}=1}^{N}r_{i_{1}}(t_{1})\cdots r_{i_{m}}(t_{m})a_{i_{1},\dots
,i_{m}}\right\vert ^{t}\,dt_{1}\cdots dt_{m}\right) ^{1/t} \\
& =\left( \int_{[0,1]^{m-1}}\left[ \left( \int_{0}^{1}\left\vert
\sum_{i_{1}=1}^{N}r_{i_{1}}(t_{1})\left( \sum_{i_{2},\dots
,i_{m}=1}^{N}r_{i_{2}}(t_{2})\cdots r_{i_{m}}(t_{m})a_{i_{1},\dots
,i_{m}}\right) \right\vert ^{t}\,dt_{1}\right) ^{\frac{1}{t}}\right]
^{t}\,dt_{2}\cdots dt_{m}\right) ^{1/t} \\
& \geq \left( \int_{\lbrack 0,1]^{m-1}}\left\vert \sum_{i_{2},\dots
,i_{m}=1}^{N}r_{i_{2}}(t_{2})\cdots r_{i_{m}}(t_{m})a_{i_{1},\dots
,i_{m}}\right\vert ^{t}\,dt_{2}\cdots dt_{m}\right) ^{1/t} \\
& \geq \left\vert a_{i_{1},\dots ,i_{m}}\right\vert ,
\end{align*}%
where we used the Contraction Principle and the induction hypothesis on the
first and second inequalities, respectively. This concludes the proof of the
lemma.
\end{proof}

Now we are able to complete the proof. Let $S:\ell _{\infty }^{n}\times
\cdots \times \ell _{\infty }^{n}\rightarrow \mathbb{K}$ be an $m$-linear
form and consider
\begin{equation*}
s=\left[ \frac{1}{\lambda _{0}}-\sum_{j=1}^{m-1}\left( \frac{1}{p_{j}}-\frac{%
1}{q_{j}}\right) \right] ^{-1}\geq 2
\end{equation*}%
and
\begin{equation*}
\lambda _{0}\in \lbrack 1,2].
\end{equation*}

Since $s\geq 2$, from Lemma \ref{778}, H\"{o}lder's inequality and
Khinchin's inequality for multiple sums (\cite{popa}), choosing $\theta =2/s$
we obtain%
\begin{align*}
& \left( \sum_{j_{1}=1}^{n}\left( \sum_{\widehat{j_{1}}=1}^{n}\left\vert
S\left( e_{j_{1}},\ldots ,e_{j_{m}}\right) \right\vert ^{s}\right) ^{\frac{1%
}{s}\lambda _{0}}\right) ^{\frac{1}{\lambda _{0}}} \\
& \leq \left( \sum_{j_{1}=1}^{n}\left( \left( \left( \sum_{\widehat{j_{1}}%
=1}^{n}\left\vert S\left( e_{j_{1}},\ldots ,e_{j_{m}}\right) \right\vert
^{2}\right) ^{\frac{1}{2}}\right) ^{\theta }\left( \max_{\widehat{j_{1}}%
}\left\vert S\left( e_{j_{1}},\ldots ,e_{j_{m}}\right) \right\vert \right)
^{1-\theta }\right) ^{\lambda _{0}}\right) ^{\frac{1}{\lambda _{0}}} \\
& \leq \left( \sum_{j_{1}=1}^{n}\left( \left( A_{\lambda _{0}}^{-\left(
m-1\right) }R_{n}\right) ^{\frac{2}{s}}R_{n}^{1-\frac{2}{s}}\right)
^{\lambda _{0}}\right) ^{\frac{1}{\lambda _{0}}} \\
& =A_{\lambda _{0}}^{\frac{-2\left( m-1\right) }{s}}\left(
\sum_{j_{1}=1}^{n}\int_{[0,1]^{m-1}}\left\vert \sum_{\widehat{j_{1}}%
=1}^{n}r_{j_{2}}(t_{2})\cdots r_{j_{m}}(t_{m})S\left( e_{j_{1}},\ldots
,e_{j_{m}}\right) \right\vert ^{\lambda _{0}}\,dt_{2}\cdots dt_{m}\right)
^{1/\lambda _{0}} \\
& =A_{\lambda _{0}}^{\frac{-2\left( m-1\right) }{s}}\left(
\int_{[0,1]^{m-1}}\sum_{j_{1}=1}^{n}\left\vert S\left(
e_{j_{1}},\sum_{j_{2}=1}^{n}r_{j_{2}}(t_{2})e_{j_{2}},\ldots
,\sum_{j_{m}=1}^{n}r_{j_{m}}(t_{m})e_{j_{m}}\right) \right\vert ^{\lambda
_{0}}dt_{2}\cdots dt_{m}\right) ^{1/\lambda _{0}} \\
& \leq A_{\lambda _{0}}^{\frac{-2\left( m-1\right) }{s}}\left(
\sup_{t_{2},\ldots ,t_{m}\in \lbrack 0,1]}\sum_{j_{1}=1}^{n}\left\vert
S\left( e_{j_{1}},\sum_{j_{2}=1}^{n}r_{j_{2}}(t_{2})e_{j_{2}},\ldots
,\sum_{j_{m}=1}^{n}r_{j_{m}}(t_{m})e_{j_{m}}\right) \right\vert ^{\lambda
_{0}}\right) ^{1/\lambda _{0}} \\
& \leq A_{\lambda _{0}}^{\frac{-2\left( m-1\right) }{s}}\left\Vert
S\right\Vert ,
\end{align*}%
where
\begin{equation*}
R_{n}:=\left( \int_{[0,1]^{m-1}}\left\vert \sum_{\widehat{j_{1}}%
=1}^{n}r_{j_{2}}(t_{2})\cdots r_{j_{m}}(t_{m})S\left( e_{j_{1}},\ldots
,e_{j_{m}}\right) \right\vert ^{\lambda _{0}}\,dt_{2}\cdots dt_{m}\right) ^{%
\frac{1}{\lambda _{0}}}.
\end{equation*}%
Repeating the same procedure for the other indexes we have%
\begin{equation*}
\left( \sum_{j_{i}=1}^{n}\left( \sum_{\widehat{j_{i}}=1}^{n}\left\vert
S\left( e_{j_{1}},\ldots ,e_{j_{m}}\right) \right\vert ^{s}\right) ^{\frac{1%
}{s}\lambda _{0}}\right) ^{\frac{1}{\lambda _{0}}}\leq A_{\lambda _{0}}^{%
\frac{-2\left( m-1\right) }{s}}\left\Vert S\right\Vert
\end{equation*}%
for all $i=1,...,m.$ Hence, from Lemma \ref{geral}, we conclude that
\begin{equation*}
\left( \sum_{j_{i}=1}^{n}\left( \sum_{\widehat{j_{i}}=1}^{n}\left\vert
T\left( e_{j_{1}},\dots ,e_{j_{m}}\right) \right\vert ^{s}\right) ^{\frac{1}{%
s}\eta _{1}}\right) ^{\frac{1}{\eta _{1}}}\leq A_{\lambda _{0}}^{\frac{%
-2\left( m-1\right) }{s}}\left\Vert T\right\Vert
\end{equation*}%
for all $m$-linear forms $T:\ell _{p}^{n}\times \cdots \times \ell
_{p}^{n}\rightarrow \mathbb{K}$ and all positive integers $n,$ where%
\begin{equation*}
\eta _{1}:=\left[ \frac{1}{\lambda _{0}}-\sum_{j=1}^{m}\left( \frac{1}{p_{j}}%
-\frac{1}{q_{j}}\right) \right] ^{-1}.
\end{equation*}%
We thus conclude that if
\begin{equation*}
s=\left[ \frac{1}{\lambda _{0}}-\frac{m-1}{p}\right] ^{-1}\geq 2
\end{equation*}%
and
\begin{equation*}
\lambda _{0}\in \lbrack 1,2]
\end{equation*}%
with%
\begin{equation*}
p>\lambda _{0}m
\end{equation*}%
and%
\begin{equation*}
\eta _{1}=\frac{\lambda _{0}p}{p-\lambda _{0}m},
\end{equation*}%
then
\begin{equation*}
\left( \sum_{j_{i}=1}^{n}\left( \sum_{\widehat{j_{i}}=1}^{n}\left\vert
T\left( e_{j_{1}},\dots ,e_{j_{m}}\right) \right\vert ^{s}\right) ^{\frac{1}{%
s}\eta _{1}}\right) ^{\frac{1}{\eta _{1}}}\leq A_{\lambda _{0}}^{\frac{%
-2\left( m-1\right) }{s}}\left\Vert T\right\Vert
\end{equation*}%
for all $m$-linear forms $T:\ell _{p}^{n}\times \cdots \times \ell
_{p}^{n}\rightarrow \mathbb{K}$ and all positive integers $n.$ In other
words, if
\begin{equation*}
\lambda _{0}\in \lbrack 1,2]
\end{equation*}%
with%
\begin{equation*}
\lambda _{0}m<p\leq \frac{2\lambda _{0}\left( m-1\right) }{2-\lambda _{0}}
\end{equation*}%
and%
\begin{eqnarray*}
\eta _{1} &=&\frac{\lambda _{0}p}{p-\lambda _{0}m}, \\
s &=&\frac{\lambda _{0}p}{p-\lambda _{0}m+\lambda _{0}},
\end{eqnarray*}%
then
\begin{equation*}
\left( \sum_{j_{i}=1}^{n}\left( \sum_{\widehat{j_{i}}=1}^{n}\left\vert
T\left( e_{j_{1}},\dots ,e_{j_{m}}\right) \right\vert ^{s}\right) ^{\frac{1}{%
s}\eta _{1}}\right) ^{\frac{1}{\eta _{1}}}\leq A_{\lambda _{0}}^{\frac{%
-2\left( m-1\right) }{s}}\left\Vert T\right\Vert
\end{equation*}%
for all $m$-linear forms $T:\ell _{p}^{n}\times \cdots \times \ell
_{p}^{n}\rightarrow \mathbb{K}$ and all positive integers $n.$

\bigskip

\begin{remark}
If $\lambda _{0}=1$ then we get%
\begin{equation*}
\left( \sum_{j_{i}=1}^{n}\left( \sum_{\widehat{j_{i}}=1}^{n}\left\vert
T\left( e_{j_{1}},\dots ,e_{j_{m}}\right) \right\vert ^{s}\right) ^{\frac{1}{%
s}\eta _{1}}\right) ^{\frac{1}{\eta _{1}}}\leq A_{1}^{\frac{-2\left(
m-1\right) }{s}}\left\Vert T\right\Vert
\end{equation*}%
with $m<p\leq 2m-2$ and
\begin{eqnarray*}
s &=&\frac{p}{p-m+1}, \\
\eta _{1} &=&\frac{p}{p-m}
\end{eqnarray*}%
and we recover \cite[Theorem 3]{rrrr}.
\end{remark}

\bigskip

\bigskip

\end{document}